\newcommand{\bB}{\mathbb{B}}
\newcommand{\bC}{\mathbb{C}}
\newcommand{\bF}{\mathbb{F}}
\newcommand{\bM}{\mathbb{M}}
\newcommand{\bP}{\mathbb{P}}
\newcommand{\bQ}{\mathbb{Q}}
\newcommand{\bR}{\mathbb{R}}
\newcommand{\bZ}{\mathbb{Z}}
\newcommand{\fH}{\mathfrak{H}}
\newcommand{\fm}{\mathfrak{m}}
\newcommand{\Aut}{\operatorname{Aut}}
\newcommand{\Ell}{\operatorname{Ell}}
\newcommand{\Tr}{\operatorname{Tr}}
\newtheorem{thm}{Theorem}[section]
\newtheorem{conj}[thm]{Conjecture}
\theoremstyle{definition}
\theoremstyle{remark}
\begin{document}

\title{Monstrous Moonshine over $\mathbb{Z}$?}
\author{Scott Carnahan}

\begin{abstract}
Monstrous Moonshine was extended in two complementary directions during the 1980s and 1990s, giving rise to Norton's Generalized Moonshine conjecture and Ryba's Modular Moonshine conjecture.  Both conjectures have been unconditionally established in the last few years, so we describe some speculative conjectures that may extend and unify them.
\end{abstract}

\maketitle

\section{Monstrous Moonshine and its extensions}

Monstrous moonshine began in the 1970s with an examination of the modular $J$-invariant, which we normalize as:
\[ J(\tau) = q^{-1} + 196884q + 21493760q^2 + \cdots \]
Here, $\tau$ ranges over the complex upper half-plane $\fH$, and the expansion in powers of $q = e^{2\pi i \tau}$ is the Fourier expansion.  The $J$-function is a Hauptmodul for $SL_2(\bZ)$, meaning $J$ is invariant under the standard action of $SL_2(\bZ)$ on $\fH$ by M\"obius transformations, and induces a complex-analytic isomorphism from the quotient $SL_2(\bZ) \backslash \fH$ to the complex projective line $\bP^1_{\bC}$ minus finitely many points.

McKay first observed a possible relationship between $J$ and the Monster sporadic simple group $\bM$, pointing out that the irreducible representations of $\bM$ have lowest dimension $1$ and $196883$, and that $196884 = 196883 + 1$.  Thompson then showed that the next few coefficients of $J$ also decompose into a small number of irreducible representations of $\bM$ \cite{T79}.  This suggested that the coefficients of $J$, which are all non-negative integers, describe a special class of representations of $\bM$.

The observations by McKay and Thompson were explained by the construction of the Monster vertex operator algebra $V^\natural = \bigoplus_{n=0}^\infty V^\natural_n$ in \cite{FLM88}.  The appearance of $J$ comes from the fact that $\sum_{n=0}^\infty \dim V^\natural_n q^{n-1} = J(\tau)$, and the appearance of the monster comes from the fact that $\Aut(V^\natural) \cong \bM$.

Borcherds showed in \cite{B92} that $V^\natural$ encoded a stronger relationship between modular functions and the monster, showing that for any $g \in \bM$, the McKay-Thompson series
\[ T_g(\tau) := \sum_{n=0}^\infty \Tr(g|V^\natural_n) q^{n-1} \]
is a Hauptmodul for some discrete group $\Gamma_g < SL_2(\bR)$.

This resolved the main ``Monstrous Moonshine'' conjecture posed in \cite{CN79}, and it also partially resolved the ``Jack Daniels'' question posed in \cite{O74}.  As Ogg noted, the 15 primes dividing the order of $\bM$ are precisely the primes $p$ such that the modular curve $X_0(p)^+$ is genus zero, and he offered a bottle of Jack Daniels for an explanation.  The explanation for the monstrous primes being contained among the genus zero primes is: for each such prime $p$, there is a conjugacy class $p$A in the monster, and Borcherds's theorem established that for any $g \in p$A, the McKay-Thompson series $T_g(\tau)$ is a Hauptmodul for $X_0(p)^+$.  The reverse containment is unlikely to be resolved without a good understanding for why $\bM$ is as big as it is.

One Moonshine-like phenomenon that Borcherds's theorem did not explain was that for $g \in p$A, the $q$-expansion of $T_g(\tau)$ has non-negative integer coefficients, and furthermore, these coefficients appear to decompose well into dimensions of representations of the centralizer $C_\bM(g)$.  For example, when $p=2$, the Hauptmodul for $X_0(2)^+$ has the $q$-expansion
\[ q^{-1} + 4372q + 96256q^2 + \cdots \]
while the centralizer $2.\bB$, a central extension of the Baby monster sporadic group, has irreducible representations of dimension $1, 4371, 96255, 96256, \ldots$.  This suggests that the Hauptmodul for $g$ in the class $p$A is given by the graded dimension of some natural representation of $C_\bM(g)$.  In fact, there were two conjectures, both resolved recently, either of which could be called an explanation of this phenomenon:

\begin{itemize}
\item Generalized Moonshine (conjectured in \cite{N87}): There is a rule that attaches to each $g \in \bM$, a graded projective representation $V(g) = \bigoplus_{n \in \bQ} V(g)_n$ of $C_\bM(g)$, and to each commuting pair $(g,h)$ in $\bM$ a holomorphic function $Z(g,h;\tau)$ on $\fH$ such that:
\begin{enumerate}
\item The series $\sum_n \Tr(\tilde{h}|V(g)_n) q^{n-1}$ converges normally on $\fH$ to $Z(g,h;\tau)$, for $\tilde{h}$ some lift of $h$ to a linear transformation of $V(g)$.
\item The assignment $(g,h) \mapsto Z(g,h;\tau)$ is invariant under simultaneous conjugation up to rescaling.
\item Each function $Z(g,h;\tau)$ is either constant or a Hauptmodul.
\item For any $\left(\begin{smallmatrix} a & b \\ c & d \end{smallmatrix} \right) \in SL_2(\bZ)$, $Z(g,h;\frac{a\tau+b}{c\tau+d})$ is proportional to $Z(g^a h^c, g^b h^d; \tau)$.
\item $Z(g,h;\tau)$ is proportional to $J(\tau)$ if and only if $g=h=1$.
\end{enumerate} 
\item Modular Moonshine (conjectured in \cite{R96}): For $g \in p$A, there is a $\bZ_{\geq 0}$-graded vertex algebra ${}^gV = \bigoplus_{n=0}^\infty {}^gV_n$ over $\bF_p$ such that for any $p$-regular element $h \in C_\bM(g)$, the graded Brauer character $\widetilde{\Tr}(h|{}^gV_n)q^{n-1}$ is equal to the McKay-Thompson series $T_{gh}(\tau)$.
\end{itemize}

These conjectures were solved along somewhat complementary paths.  For Generalized Moonshine, the candidate representation $V(g)$ was more or less identified as the irreducible $g$-twisted $V^\natural$-module $V^\natural(g)$ in \cite{DGH88}, and these twisted modules were proved to exist for all $g \in \bM$ in \cite{DLM97}, resolving claims 1, 2, and 5.  However, the main conjectured properties were only resolved recently, with claim 4 proved in \cite{CM16} and claim 3 in \cite{GM4}.  For Modular Moonshine, the vertex algebras were identified as Tate cohomology groups $\hat{H}^0(g,V^\natural_{\bZ})$, and the conjectured properties were proved in \cite{BR96}, \cite{B98}, and \cite{B99} under the assumption of existence of a self-dual integral form $V^\natural_{\bZ}$ of $V^\natural$.  The existence of the essential object was only resolved recently, in \cite{C17}.

In both cases, substantial progress was made in the late 1990s, but complete solutions were not possible due to the underdeveloped state of the theory of vertex operator algebras at the time.  The most necessary piece of technology was a solid theory of cyclic orbifolds, worked out in \cite{vEMS}.  This in turn depended on many foundational developments in the intervening years, such as the Verlinde formula \cite{H04}, a theory of logarithmic tensor products (e.g., \cite{HLZ6}), $C_2$-cofiniteness for fixed-point subVOAs \cite{M13}, and a theory of quantum dimension for VOAs \cite{DJX13}.

\section{Open refinements}

After the Generalized Moonshine and Modular Moonshine conjectures were proposed, some refinements have been proposed based on experimental evidence and theoretical developments.

For Generalized Moonshine, it is natural to ask if the ambiguous constants in claims 2 and 4 can be refined by a good choice of functions $Z(g,h;\tau)$.  Norton's original proposal in \cite{N87} suggested that the constants can be refined to roots of unity, and a later proposal in \cite{N01} asserted that these roots of unity may be taken to be 24th roots of unity.  More recently, the constants were conjectured to be controlled by a nontrivial ``Moonshine Anomaly'' $\alpha^\natural \in H^3(\bM,U(1))$ in \cite{GPV13}, and Norton's assertion amounts to the claim that this anomaly has order 24.  The refined behavior in claims 2 and 4 are not particularly special in a moonshine sense, as it is expected for any holomorphic vertex operator algebra.

There is some progress on this refinement of Generalized Moonshine:
\begin{enumerate}
\item In \cite{C17a}, Theorem 6.2 asserts that the constants relating Hauptmoduln may be taken to be roots of unity.  However, there is still no control over the constant functions.  Also, the techniques used to prove this theorem only work for $V^\natural$ (and other holomorphic vertex operator algebras with the same character, but these were conjectured to not exist in the introduction to \cite{FLM88}).  A similar result for other holomorphic vertex operator algebras would require additional development in the theory of regularity for fixed-points.
\item In \cite{JF17}, the main result is that the Moonshine Anomaly has order 24.  However, at the moment I am writing this, the proof seems to rest on still-conjectural parts of a dictionary relating the theories of conformal nets and vertex operator algebras.
\end{enumerate}

There have been some intriguing developments regarding connections between elliptic cohomology and monstrous moonshine since the 1980s, but there are still essential ingredients missing from the story.  Essentially, there is a conjectural connection through conformal field theory (proposed in \cite{S87}, further developed in \cite{BT99} and ongoing work of Stolz and Teichner), and a connection through higher characters (also mentioned in \cite{S87}, following the theory  developed in \cite{HKR00}).  Based on the introduction of \cite{G07}, it seems that all of the claims of the Generalized Moonshine conjecture except the Hauptmodul claim have had a known basis in elliptic cohomology theory since the 1990s.  In particular, claims 2, 4, and 5 in the Generalized Moonshine conjecture (namely those that don't involve $V(g)$ or the Hauptmodul condition) amount to the claim that the collection of functions $Z(g,h;\tau)$ describe an element of twisted equivariant cohomology of a point, in the sense of \cite{GKV95}.  That is, $Z \in \Ell^\alpha_{\bM}(pt)$.  If we ignore positivity, Claim 1, which connects $Z$ to $V(g)$, amounts to the claim that $Z$ is an element in twisted Devoto equivariant Tate K-theory of a point, i.e., $Z \in K_{Dev,G}^\alpha(pt)$.

There have been more refinements in recent years.  First, Lurie has pointed out in \cite{L09} that elliptic cohomology can be made genuinely 2-equivariant, in the sense that we may consider a more functorial approach to equivariance, and also allow categorical groups.  This would let us shift from an ad hoc definition of $Ell^\alpha_{\bM}(pt)$ to a natural definition of $Ell_{\widetilde{\bM}}(pt)$, where $\widetilde{\bM}$ is a categorical group defined by a 3-cocycle representing $\alpha^\natural$, but unfortunately this new definition has yet to fully appear in public.  Second, Ganter has employed equivariant Hecke operators, first seen as operations on elliptic cohomology in \cite{B90}, to explore the genus zero condition from a topological perspective in \cite{G07}.  She defined an exponential operation
\[ f \mapsto \Lambda^{(2)}_t(f) = \exp(-\sum_{k>0} T_k(f) t^k) \]
taking $Ell^\alpha_G$ to $\bigoplus_{n \geq 0} Ell^{-n\alpha}_G t^n$ by a formal sum of Hecke operators, and interpreted replicability as the equality $p(f(p) - f(q)) = \Lambda^{(2)}_{-p}(f(q))$.  By expanding at a cusp, this formula comes very close to the twisted denominator formulas for Monstrous Lie algebras $\fm_g$ given in \cite{GM1} and \cite{GM2}.  The main difference seems to be the fact that the Weyl denominator of $\fm_g$ is given by $Z(1,g;\sigma) - Z(g,1;\tau)$ rather than $Z(1,g;\sigma) - Z(1,g;\tau)$.  Assuming this can be fixed in a natural way, this condition is essentially the statement that $Z$ is ``self-exponential'' at chromatic level 2, i.e., the exponential operation $\Lambda^{(2)}$ basically takes $Z$ to a slightly altered version of itself.  At chromatic level 1, we have similar behavior with line bundles, since the exterior algebra of a line satisfies $\Lambda^*(L) = 1 + L$.  That is, self-exponential objects are ``exceptionally small'' in a way that is highly nontrivial in the case of elliptic objects.  We then reformulate Generalized Moonshine as:

\begin{conj}
$V^\natural$ defines an element of $\Ell^{\alpha^\natural}(B\bM)$ that is self-exponential.
\end{conj}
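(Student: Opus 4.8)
The plan is to combine the now-established Generalized Moonshine theorem with Ganter's topological reformulation of replicability; the remaining gap lies in the foundations of equivariant elliptic cohomology. This proceeds in three stages. First I would produce the underlying class. The functions $Z(g,h;\tau)$ exist and satisfy claims 1, 2, and 5 by \cite{DGH88, DLM97}, claim 4 holds by \cite{CM16}, and claim 3 holds by \cite{GM4}. Claim 2 packages the $Z(g,h;-)$ into a section of a sheaf over the groupoid of commuting pairs in $\bM$, claim 4 upgrades this to equivariance for the mapping-class group $SL_2(\bZ)$ of the elliptic curve, and the two rescaling ambiguities assemble into a single twisting datum. Pinning that datum down --- showing it is torsion \cite{C17a} and realizes the Moonshine Anomaly $\alpha^\natural \in H^3(\bM, U(1))$ of \cite{GPV13}, of order dividing $24$ \cite{JF17} --- then yields, modulo the conjectural conformal-net input on which \cite{JF17} currently rests, a well-defined element of $\Ell^{\alpha^\natural}(B\bM)$ in the sense of \cite{GKV95}, with claim 5 identifying its restriction to the identity sector with $J$, the graded character of $V^\natural$.

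Next I would promote this from a character to a structure carried by $V^\natural$ itself. Using the identification $V(g) = V^\natural(g)$ of \cite{DGH88} (existence for all $g$ by \cite{DLM97}) and the cyclic-orbifold theory of \cite{vEMS}, the family of twisted modules $\{V^\natural(g)\}_{g\in\bM}$, together with the modular-tensor structure governing their fusion, should be the geometric object --- a twisted $\bM$-equivariant elliptic object --- realizing $Z$ simultaneously as the promised class in $\Ell^{\alpha^\natural}(B\bM)$ (claims 2, 4, 5) and as an element of twisted Devoto equivariant Tate K-theory $K^{\alpha^\natural}_{Dev,\bM}(\mathrm{pt})$ (claim 1). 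For the arithmetic (``over $\bZ$'') refinement one would additionally feed in the self-dual integral form $V^\natural_{\bZ}$ of \cite{C17}, so that the construction lands in an integral form of $\Ell^{\alpha^\natural}(B\bM)$ and specializes at each prime $p$ to the Modular Moonshine data $\hat{H}^0(g, V^\natural_{\bZ})$.

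Finally I would check self-exponentiality. Ganter's operation $\Lambda^{(2)}_t(f) = \exp(-\sum_{k>0} T_k(f) t^k)$ is built from the equivariant Hecke operators of \cite{B90}; I would evaluate it on $Z$ cusp by cusp and match the output against the twisted denominator identities for the Monstrous Lie algebras $\fm_g$ of \cite{GM1, GM2}. Those identities express an infinite product over the positive roots of the Borcherds--Kac--Moody algebra $\fm_g$ built from $V^\natural(g)$ --- whose root multiplicities are read off from the $Z(g^a h^c, g^b h^d;-)$ --- as a difference of two Hauptmoduln, and the exponentiated sum of Hecke operators is exactly the expansion of the product side; the genus-zero property of \cite{GM4} is what forces the two sides to agree. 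Since replicability is Ganter's reformulation of precisely this kind of identity, the upshot would be that $\Lambda^{(2)}$ sends the class of $V^\natural$ to a slightly altered version of itself.

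I expect the main obstacle to be twofold. Foundationally, the statement presupposes a model of twisted, genuinely $2$-equivariant elliptic cohomology --- Lurie's $\Ell_{\widetilde{\bM}}(\mathrm{pt})$ with $\widetilde{\bM}$ the categorical group of a $3$-cocycle for $\alpha^\natural$, as sketched in \cite{L09} --- in which ``$V^\natural$ defines an element of $\Ell^{\alpha^\natural}(B\bM)$'' has a precise meaning compatible with both the \cite{GKV95} and the Devoto pictures; building and justifying such a model is a prerequisite rather than a lemma, and its incompleteness is exactly why the current literature stops short of an unconditional statement. Computationally, the denominator identities naturally produce the \emph{asymmetric} Weyl denominator $Z(1,g;\sigma) - Z(g,1;\tau)$, whereas the naive exponential operation produces the symmetric $Z(1,g;\sigma) - Z(1,g;\tau)$; reconciling these --- presumably by showing that the correct equivariant Hecke operators act on the whole collection of twisted sectors in a way that automatically inserts the extra conjugation --- is precisely the point left open in \cite{G07}, and it is where I would expect most of the real work to lie.
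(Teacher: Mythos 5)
The statement you are asked to prove is a conjecture, and the paper offers no proof of it; on the contrary, it states explicitly that ``the main reason this is still conjectural is because the words `defines' and `self-exponential' are still rather ill-defined.'' Your proposal is therefore best read as a reconstruction of the heuristic chain the paper itself uses to \emph{motivate} the conjecture, and as such it is accurate: you assemble the same ingredients (the resolved claims of Generalized Moonshine via \cite{DLM97}, \cite{CM16}, \cite{GM4}; the anomaly $\alpha^\natural$ of \cite{GPV13} with the order-24 claim of \cite{JF17} and its conformal-net caveat; the \cite{GKV95} and Devoto pictures; Ganter's $\Lambda^{(2)}$ and the twisted denominator formulas of \cite{GM1} and \cite{GM2}), and you correctly isolate the two places where the argument is not yet mathematics --- the absence of a model of twisted 2-equivariant elliptic cohomology in which ``$V^\natural$ defines an element'' is a well-posed assertion, and the mismatch between the asymmetric Weyl denominator $Z(1,g;\sigma) - Z(g,1;\tau)$ and the symmetric expression $Z(1,g;\sigma) - Z(1,g;\tau)$ that the naive exponential operation produces.

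The genuine gap, then, is not a flaw in your reasoning but the fact that there is nothing here that amounts to a proof: every load-bearing step (``should be the geometric object,'' ``the upshot would be,'' ``presumably by showing'') defers to structures that have not been constructed. In particular, stage two of your plan --- promoting the collection of functions $Z(g,h;\tau)$ to a structure carried by $V^\natural$ landing in a cohomology theory, with an integral refinement specializing to $\hat{H}^0(g,V^\natural_\bZ)$ --- presupposes both Lurie's not-yet-public framework and a resolution of the gluing problem the paper discusses in its later sections (where it notes, citing Rickard, that matching Brauer characters is far from sufficient to produce compatible mixed-characteristic objects). You flag the right obstacles, but flagging them is where the paper stops too; neither you nor the paper gets past them.
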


The main reason this is still conjectural is because the words ``defines'' and ``self-exponential'' are still rather ill-defined.

For Modular Moonshine, the behavior that is now known to hold for Fricke-invariant classes of prime order is also expected to hold for all Fricke-invariant classes.  That is, we have:

\begin{conj} \label{conj:composite-modular-moonshine} (essentially \cite{BR96} open problems 3 and 4)
For each Fricke element $g \in \bM$, we have $\hat{H}^1(g,V^\natural_{\bZ}) = 0$.  Furthermore, the vertex algebra $\hat{H}^0(g,V^\natural_\bZ)$ over $\bZ/|g|\bZ$ admits an action of $C_\bM(g)$ by automorphisms, such that for any $h \in C_\bM(g)$ satisfying $(|g|,|h|)=1$, the graded Brauer character of $h$ on $\hat{H}^0(g,V^\natural_\bZ)$ is equal to the $q$-expansion of the McKay-Thompson series $T_{gh}(\tau)$.
\end{conj}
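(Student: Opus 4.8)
The plan is to push the strategy of \cite{BR96}, \cite{B98}, \cite{B99} from prime order to prime-power order using the integral cyclic-orbifold machinery now available, and then to pin down the genuinely new difficulty. First I would reduce to $|g| = p^k$. Tate cohomology of $\langle g\rangle$ is killed by $|g|$ and splits into its $p$-primary summands over $p \mid |g|$, each computed by the Sylow $p$-subgroup, so $\hat H^i(g, V^\natural_\bZ)$ decomposes accordingly and one may work over $\bZ_p$, viewing $\hat H^0(g, V^\natural_\bZ)$ as a vertex algebra over $\bZ/p^k\bZ$. Any $h$ with $(|g|,|h|)=1$ remains $p$-regular, and its graded Brauer character is taken after reduction modulo $p$ (equivalently by lifting eigenvalues of $h$ to Teichm\"uller units), so that comparison with the integral $q$-expansion of $T_{gh}$ is literal.

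Next I would assemble the integral input. From \cite{C17} take the self-dual, $\bM$-stable form $V^\natural_\bZ$; from \cite{DLM97} the $g$-twisted module $V^\natural(g)$; from \cite{vEMS}, in the integral refinement of \cite{C17a}, the cyclic-orbifold chain realizing the $\bZ/p^k$-orbifold of $V^\natural$ along $\langle g\rangle$, which by the Fricke hypothesis reconstructs $V^\natural$; and, from the no-ghost theorem applied to $V^\natural_\bZ \otimes V_{\mathrm{II}_{1,1},\bZ}$, a self-dual integral form $\fm_\bZ$ of the Monster Lie algebra $\fm$ together with a $g$-twisted, $C_\bM(g)$-equivariant integral form $\fm_{g,\bZ}$ of the Monstrous Lie algebra $\fm_g$. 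One also needs, degree by degree via the no-ghost isomorphism, that $\hat H^\bullet(g,\fm_\bZ)$ computes $\hat H^\bullet(g, V^\natural_\bZ)$ and that the twisted denominator identities of \cite{GM1}, \cite{GM2} hold for these integral forms.

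I would then carry out the core computation: reducing the twisted denominator identity for $\fm_{g,\bZ}$ modulo $p^k$ gives a modular denominator identity for the Lie algebra $\hat H^0(g,\fm_\bZ)$ over $\bZ/p^k\bZ$, whose root multiplicities are the graded Brauer characters of $h$ on $\hat H^0(g,V^\natural_\bZ)$, and comparing it with the ordinary Monster denominator identity for the class of $gh$ identifies them with the $q$-expansion of $T_{gh}$; in particular the case $h=1$ shows the graded dimension of $\hat H^0(g,V^\natural_\bZ)$ equals the $q$-expansion of $T_g$, and combined with the Euler-characteristic (Lefschetz) count for $\hat H^0 - \hat H^1$ this forces $\hat H^1(g,V^\natural_\bZ)=0$. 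The remaining assertions are then formal: $\hat H^0(g,V^\natural_\bZ) = (V^\natural_\bZ)^g / N V^\natural_\bZ$, with $N = 1 + g + \cdots + g^{p^k-1}$ the norm, is a vertex algebra over $\bZ/p^k\bZ$ because $N V^\natural_\bZ$ is an ideal for the fixed-point subalgebra, and $C_\bM(g)$ acts on it by vertex-algebra automorphisms because it commutes with $g$ and preserves both $(V^\natural_\bZ)^g$ and $N V^\natural_\bZ$. Reassembling the prime-power pieces finishes the proof.

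The main obstacle is the integral input above for $k \geq 2$. When $|g|=p$, Tate cohomology of $\bZ/p$-lattices is transparent, the Euler characteristic $\widetilde{\Tr}(h\mid\hat H^0) - \widetilde{\Tr}(h\mid\hat H^1)$ is already a fixed combination of traces of powers of $g$ on $V^\natural$, and the twisted module never has to be understood as an object over $\bZ$; this is essentially the content of \cite{B98}, \cite{B99}. For $k \geq 2$ this breaks down: the category of $\bZ_p[\bZ/p^k]$-lattices has wild representation type, and there are indecomposable ``mixed'' summands --- already $\bZ_p[\bZ/p]$ viewed through $\bZ/p^2 \twoheadrightarrow \bZ/p$ is one --- whose contribution to $\hat H^\bullet(g,V^\natural_\bZ)$ is invisible to the characters of powers of $g$ but is detected by the reduction modulo $p^k$ of $V^\natural(g)$. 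So the modular denominator identity and the Euler-characteristic count above are only as strong as an \emph{integral}, in fact $p^k$-level, refinement of the cyclic-orbifold correspondence of \cite{vEMS}, \cite{C17a} that tracks the twisted module $V^\natural(g)$ and its intertwining data modulo $p^k$, together with the corresponding integral form of the twisted denominator identity. Producing that refined orbifold theory --- playing, for Modular Moonshine, the role \cite{vEMS} played for Generalized Moonshine --- is what remains to be done.
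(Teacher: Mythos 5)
The first thing to say is that the paper does not prove this statement: it is stated as a conjecture (``essentially \cite{BR96} open problems 3 and 4''), and the author's only remarks about its status are that the prime-order case is the settled Modular Moonshine conjecture, that \cite{BR96} Theorem 4.7 gives the vanishing of $\hat{H}^1$ for the composite classes 15A and 21A, and that the general case ``does not seem to be far beyond the reach of current technology.'' So there is no proof in the paper to compare yours against, and what you have written is, by your own account, not a proof either: your final paragraph concedes that the essential input --- an integral, $p^k$-level refinement of the cyclic-orbifold correspondence of \cite{vEMS} and \cite{C17a} that controls $V^\natural(g)$ and the twisted denominator identity modulo $p^k$ --- does not yet exist. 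That is a genuine gap, not a technicality: it is precisely the kind of foundational development the paper identifies as the bottleneck for this circle of problems, and the entire middle of your argument (the ``core computation'') is conditional on it. A proposal that ends with ``producing that refined orbifold theory is what remains to be done'' is a research program, and a reasonable one, but it cannot be accepted as a proof of the conjecture.

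Beyond that acknowledged gap, one step of the core computation is circular as written. You say the twisted denominator identity gives the graded Brauer character of $\hat{H}^0 - \hat{H}^1$, that the case $h=1$ ``shows the graded dimension of $\hat{H}^0(g,V^\natural_\bZ)$ equals the $q$-expansion of $T_g$,'' and that this combined with the Euler-characteristic count forces $\hat{H}^1=0$. But the Euler-characteristic identity only pins down the \emph{difference} of the two cohomology groups; to extract $\hat{H}^0$ alone you must already know $\hat{H}^1=0$, which is the very thing you are trying to deduce. In \cite{BR96} and \cite{B98} the vanishing of $\hat{H}^1$ is not a formal consequence of the denominator identity: it requires separate structural input about $V^\natural_\bZ$ as a $\bZ[\langle g\rangle]$-lattice (for instance, exhibiting enough of it as a permutation module, or an independent upper bound on $\hat{H}^0$ forcing the positivity argument to close). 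Your own discussion of the wild representation type of $\bZ_p[\bZ/p^k]$-lattices for $k\geq 2$ shows why this is harder, not easier, beyond prime order, so this step needs a genuinely new idea rather than the Lefschetz count. The reduction to prime-power order also deserves more care: the $p$-primary part of $\hat{H}^*(\langle g\rangle, M)$ is only a direct summand of $\hat{H}^*(\langle g_p\rangle, M)$ via restriction and transfer, not the whole of it, and the notion of graded Brauer character over $\bZ/|g|\bZ$ for composite $|g|$ must be set up as in \cite{BR96} before the comparison with $T_{gh}$ is even meaningful.
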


The case where $g$ has prime order amounts to the Modular Moonshine conjecture, which is settled.  However, \cite{BR96} Theorem 4.7 also yields the vanishing of $\hat{H}^1(g,V^\natural_{\bZ})$ for $g$ in the composite order classes 15A and 21A.  This conjecture does not seem to be far beyond the reach of current technology.

\section{Unifications}

It is reasonable to ask whether the two conjectures that extend Monstrous Moonshine are really part of some larger, unified Moonshine.  Borcherds proposed one possible unifying conjecture, which we rephrase slightly:

\begin{conj} \label{conj:unified-moonshine} (\cite{B98} Conjecture 7.1)
There is a rule that assigns to each element $g \in \bM$ a $\frac{1}{|g|}\bZ$-graded super-module over $\bZ[e^{2\pi i /|g|}]$, equipped with an action of a central extension $(\bZ/|g|\bZ).C_\bM(g)$, satisfying the following properties:
\begin{enumerate}
\item ${}^1\hat{V}$ is a self-dual integral form of $V^\natural$.
\item If $g$ and $h$ commute, then $\hat{H}^*(\hat{g},{}^h\hat{V}) \cong {}^{gh}\hat{V} \otimes \bZ/|g|\bZ$ for some lift $\hat{g}$ of $g$.
\item If $g$ is Fricke (i.e., $T_g(\tau) = T_g(\frac{-1}{N\tau})$ for some $N \in \bZ_{>0}$), then
\begin{enumerate}
\item ${}^g\hat{V} \otimes \bZ/|g|\bZ \cong \hat{H}^0(g,{}^1\hat{V})$ as vertex algebras over $\bZ/|g|\bZ$ with $C_\bM(g)$-action.  In particular, $\hat{H}^1(g,{}^1\hat{V}) = 0$.
\item ${}^g\hat{V} \otimes \bC \cong V^\natural(g)$ as graded $(\bZ/|g|\bZ).C_\bM(g)$-modules.
\end{enumerate}
\item ${}^g\hat{V}$ is ``often'' a vertex superalgebra with a conformal vector and a self-dual invariant symmetric bilinear form.
\item If $g$ and $h$ commute and have coprime order, then the graded Brauer character of some lift of $g$ on ${}^h\hat{V}$ is equal to $T_{gh}(\tau)$.
\end{enumerate}
\end{conj}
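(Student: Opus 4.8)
The plan is to construct all of the modules ${}^g\hat{V}$ simultaneously, as self-dual $\bZ[e^{2\pi i/|g|}]$-integral forms sitting inside the abelian intertwining algebra attached to $V^\natural$ by cyclic orbifold theory, and then to read off the five properties by specialization. Fix $g \in \bM$ of order $N$ (and write $\zeta_N = e^{2\pi i/N}$). By \cite{vEMS} there is a $\tfrac1N\bZ \times \bZ/N\bZ$-graded abelian intertwining algebra $\cV_g = \bigoplus_{j \bmod N} V^\natural(g^j)$ carrying an action of $\langle g\rangle$ and a projective action of $C_\bM(g)$, i.e.\ a genuine action of a central extension $(\bZ/N\bZ).C_\bM(g)$ whose class is the restriction of $\alpha^\natural$. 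The first step is to show that the self-dual integral form $V^\natural_\bZ$ of \cite{C17} extends to a self-dual $\bZ[\zeta_N]$-form $\cV_{g,\bZ} = \bigoplus_j {}^{g^j}\hat V$ of $\cV_g$, stable under $\langle g\rangle$ and under the central extension $(\bZ/N\bZ).C_\bM(g)$; its degree-$0$ summand is $V^\natural_\bZ$ itself, which is property 1, and ${}^{g^j}\hat V$ is by definition the degree-$j$ summand. I expect this to go through by the method of \cite{C17}: the set of such forms is a torsor under an $H^1$ of the (super)group scheme of automorphisms acting on $\cV_g$ and preserving all the structure, and self-duality of the invariant bilinear form together with an explicit lattice model should force that $H^1$ to vanish.

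Granting the construction, property 2 is the crux. One must compute the Tate cohomology $\hat H^*(\hat g, {}^h\hat V)$ for a commuting pair $(g,h)$, where $\hat g$ is a suitable lift of $g$ acting on the $h$-twisted form ${}^h\hat V$. The decisive point is the vanishing of $\hat H^1$. For $N = |g|$ prime this is the content of Modular Moonshine and is known \cite{B98, B99}; for the composite classes $15$A and $21$A it is \cite{BR96} Theorem 4.7. In general I would establish it by dévissage over the primes $\ell \mid N$: filtering $\bZ/N\bZ$ by its Sylow subgroups reduces the composite case to iterated prime-order computations, and in each of those, self-duality of $\cV_{g,\bZ}$ pins down the size of $\hat H^0$, so that the expected (manifestly non-negative) graded dimension — which is the $q$-expansion extracted from the generalized-moonshine trace function $Z(g,h;\tau)$, now known to be a Hauptmodul by \cite{CM16}, \cite{GM4}, \cite{C17a}, and also visible in the twisted denominator identities of \cite{GM1}, \cite{GM2} — forces $\hat H^1 = 0$. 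The same character computation identifies the graded $(\bZ/N\bZ).C_\bM(g)$-module $\hat H^0(\hat g, {}^h\hat V)$ numerically with ${}^{gh}\hat V \otimes \bZ/N\bZ$; upgrading this numerical coincidence to an isomorphism requires a rigidity statement — essentially the uniqueness of an irreducible twisted module with prescribed character and equivariant structure — which I would deduce from the regularity of the relevant fixed-point subalgebras.

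Properties 3, 4 and 5 should then follow with comparatively little further work. Property 3(b), that ${}^g\hat V\otimes\bC \cong V^\natural(g)$ for Fricke $g$, is \cite{DLM97} plus the character identification already used, and property 5 follows from property 2 by taking the Brauer character of an element $h$ of order prime to $|g|$ and recognizing the resulting $q$-series, via the genus-zero property, as $T_{gh}(\tau)$; the prime-order instance of exactly this deduction is carried out in \cite{B98}, \cite{B99}. For non-Fricke $g$ there is no characteristic-zero avatar of the full ${}^g\hat V$ (the graded dimensions do not match those of $V^\natural(g)$), but properties 2 and 5 still apply and ${}^g\hat V$ is genuinely a $\tfrac1N\bZ$-graded mod-$N$ vertex superalgebra; property 4 is deliberately imprecise, and I would simply tabulate, class by class, when the orbifold carries a conformal vector and a self-dual invariant form.

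The main obstacle is precisely the one flagged after Conjecture~\ref{conj:composite-modular-moonshine}: there is at present no theory of cyclic orbifolds over $\bZ$, or over $\bZ/N\bZ$ and $\bZ[\zeta_N]$, with the control over torsion under reduction modulo the primes dividing $N$ that both the construction step and property 3(a) demand. The complex theory of \cite{vEMS} is built on the Verlinde formula \cite{H04}, logarithmic tensor products \cite{HLZ6}, $C_2$-cofiniteness of fixed-point subalgebras \cite{M13}, and quantum dimensions \cite{DJX13}; one would need integral refinements of all of these — in particular, $\bZ[\zeta_N]$-forms of the twisted modules $V^\natural(g^j)$ whose mod-$\ell$ reductions remain $C_2$-cofinite with the expected fusion — before the strategy above can be completed. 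The vanishing of $\hat H^1(g, V^\natural_\bZ)$ for every composite Fricke class beyond $15$A and $21$A is the concrete arithmetic shadow of this missing technology, and seems the natural first target.
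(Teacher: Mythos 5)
The statement you are asked to prove is a \emph{conjecture} (Borcherds's Conjecture 7.1 from \cite{B98}, restated), and the paper offers no proof of it; it remains open. What the paper does supply is a partial result in the direction of your first step: a self-dual form of the abelian intertwining algebra containing $V^\natural(g)$, but only over $\bZ[\frac{1}{N}, e^{2\pi i/N}]$, i.e.\ \emph{with $N$ inverted}. Your plan quietly strengthens this to a self-dual $\bZ[\zeta_N]$-form $\cV_{g,\bZ}$ without inverting $N$, and that is not a technical refinement one "expects to go through by the method of \cite{C17}" --- it is precisely the unresolved heart of the problem. Inverting $N$ destroys exactly the information needed for properties 2, 3(a) and 5, since Tate cohomology $\hat{H}^*(\hat g,-)$ and reduction mod $|g|$ see only the $N$-torsion structure. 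The paper flags this explicitly: to pass from the $\bZ[\frac{1}{N},\zeta_N]$-form to ${}^g\hat V$ itself one must choose a stable lattice whose reduction mod $|g|$ is equivariantly isomorphic to $\hat H^0(g,{}^1\hat V)$, and (as Rickard pointed out, \cite{R18}) the agreement of ordinary characters with Brauer characters is far from sufficient to produce such a compatible mixed-characteristic representation. Your appeal to an $H^1$-torsor argument and "an explicit lattice model" does not engage with this obstruction.

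Two further steps are asserted rather than argued. First, the claim that self-duality plus non-negativity of the expected graded dimension "forces $\hat H^1=0$" is the Borcherds--Ryba mechanism for prime order, but your proposed d\'evissage of $\bZ/N\bZ$ through its Sylow subgroups does not reduce the composite case to prime-order instances of the known theorems: restricting to an $\ell$-Sylow subgroup of $\langle g\rangle$ replaces $g$ by a power $g^{N/\ell^k}$, which lies in a different conjugacy class, and the vanishing of $\hat H^1(g,V^\natural_\bZ)$ for composite Fricke $g$ is exactly the content of the still-open Conjecture~\ref{conj:composite-modular-moonshine} (known beyond prime order only for 15A and 21A via \cite{BR96} Theorem 4.7). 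Second, upgrading the numerical identification of $\hat H^0(\hat g,{}^h\hat V)$ with ${}^{gh}\hat V\otimes\bZ/N\bZ$ to an equivariant isomorphism of vertex algebras is not a consequence of regularity of fixed-point subalgebras over $\bC$; there is at present no rigidity or uniqueness theory for (twisted) modules over $\bZ/N\bZ$ or $\bZ[\zeta_N]$. Your closing paragraph correctly identifies the missing integral orbifold technology, but that admission means the text is a research program whose first and central steps are open, not a proof.
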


This conjecture is not a true unification, since it does not fully encompass Generalized Moonshine.  In particular, the $SL_2(\bZ)$ and Hauptmodul claims are missing, and there are no claims about non-Fricke twisted modules.  These can be added by hand, but without some additional structure, they do not seem to blend into the rest of the conjecture naturally.

It may be rewarding to consider how much of Borcherds's conjecture we can prove with current technology.

First, we may assign to ${}^1\hat{V}$ a self-dual integral form $V^\natural_\bZ$ with $\bM$-symmetry.  We have existence from Theorem 2.4.2 of \cite{C17}, but do not have a good uniqueness result yet.

Second, we can get pretty close to a candidate object for ${}^g\hat{V}$ when $g$ is Fricke:

\begin{thm}
Let $g \in \bM$ be a Fricke element, and let $N$ be the level of $T_g(\tau)$.  Then there is a $\bZ[\frac{1}{N}, e^{2\pi i/N}]$-form of the irreducible twisted module $V^\natural(g)$, with an action of $V^\natural_\bZ \otimes \bZ[\frac{1}{N}, e^{2\pi i/N}]$, and compatible symmetries by $(\bZ/|g|\bZ).C_\bM(g)$.
\end{thm}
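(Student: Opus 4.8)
The plan is to obtain the form of $V^\natural(g)$ by descending the cyclic orbifold construction of \cite{vEMS} to the ring $R := \bZ[\frac{1}{N},e^{2\pi i/N}]$, starting from the self-dual integral form $V^\natural_\bZ$ produced by Theorem 2.4.2 of \cite{C17}. Write $n=|g|$ and note that $n \mid N$, so $n \in R^\times$. Put $V_R := V^\natural_\bZ \otimes R$; the automorphism $g$ still acts on $V_R$, and because $n$ is invertible in $R$ the eigenspace decomposition $V_R = \bigoplus_{j \in \bZ/n\bZ} V_R^{(j)}$ holds with $V_R^{(j)}$, the $e^{2\pi i j/n}$-eigenspace of $g$, an $R$-module direct summand. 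In particular $V_R^{(0)} = (V^\natural_\bZ)^g \otimes R$ is an $R$-form of the fixed-point subVOA $U := (V^\natural)^g$, and the self-duality of $V^\natural_\bZ$ descends to make it a self-dual $R$-form of $U$; this self-duality is the lever for the later denominator estimates.

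Next I would construct, over $R$, the $(\bZ/n\bZ)^2$-graded abelian intertwining algebra $\cB = \bigoplus_{i,j} W_{i,j}$ underlying the orbifold, where $V^\natural(g^i) = \bigoplus_j W_{i,j}$ and the $W_{i,j}$ exhaust the simple $U$-modules. Two features of \cite{vEMS} make this feasible integrally. First, the braided tensor category of $U$-modules is pointed --- equivalent to $(\bZ/n\bZ)^2$ with an explicit quadratic form --- so every $W_{i,j}$ is a simple current of quantum dimension $1$, which bounds the structure constants of $\cB$. Second, the transition matrices and operator-product coefficients entering the construction (genus-zero associativity and commutativity data, the $SL_2(\bZ)$-action on the span of characters, the normalizations of the intertwining operators) may be taken in $\bQ(e^{2\pi i/N})$, by the rationality of the McKay--Thompson series and the fact that the relevant conformal weights modulo $\bZ$ lie in $\frac{1}{N}\bZ$; a simple-current argument then bounds their denominators by a power of $N$, hence places them in $R$. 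One anchors the construction at two known $R$-forms: $W_{0,j,R} := V_R^{(j)}$ along the $(0,\ast)$-column; and, using that $g$ is Fricke so that an isomorphism $\widetilde V := \bigoplus_i W_{i,0} \cong V^\natural$ transports $V^\natural_\bZ$ to an integral form $\widetilde V_\bZ$ of the orbifold, the $\widetilde g$-eigenspaces $W_{i,0,R}$ along the $(\ast,0)$-column. After checking that these two columns induce the \emph{same} $R$-form of $U = W_{0,0}$, the products $W_{i,j,R} := W_{i,0,R}\boxtimes_R W_{0,j,R}$ are forced and give compatible $R$-forms of all the $W_{i,j}$. Then $V^\natural(g)_R := \bigoplus_j W_{1,j,R}$ is an $R$-lattice in $V^\natural(g)$, the intertwining operators $W_{0,j,R}\times W_{1,j',R}\to W_{1,j+j',R}$ assemble into a $g$-twisted $(V^\natural_\bZ\otimes R)$-module structure on it, and since $\bC\otimes_R V^\natural(g)_R$ is a nonzero $g$-twisted $V^\natural$-module it is isomorphic to the irreducible module $V^\natural(g)$ by \cite{DLM97}.

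For the symmetries, $C_\bM(g) \subset \bM = \Aut(V^\natural_\bZ)$ preserves $V^\natural_\bZ$ and commutes with $g$, so it acts on each $V_R^{(j)}$, and by functoriality of the ($R$-linear) orbifold construction it acts compatibly on every $W_{i,j,R}$, in particular on $V^\natural(g)_R$. On the twisted sectors this action is only projective, with obstruction of order dividing $n$; passing to the central extension $(\bZ/n\bZ).C_\bM(g)$ --- whose central $\bZ/n\bZ$ acts through the $\langle g\rangle$-symmetry intrinsic to the twisted module, i.e. by $n$-th roots of unity in $R^\times$ --- linearizes it, and this linear action preserves $V^\natural(g)_R$.

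The main obstacle is the integral descent in the second step: one must verify that the analytic inputs of \cite{vEMS} (modular invariance, the Verlinde formula, convergence and analytic continuation of intertwining operators) contribute only structure constants that are (a) algebraic and fixed by $\Gal(\Qbar/\bQ(e^{2\pi i/N}))$ and (b) $N$-integral. Point (a) should follow from the known rationality of trace functions in moonshine, but (b) is delicate and is precisely why $N$ must be inverted --- a crude bound already confines denominators to primes dividing $N$, and excluding any further prime uses both the self-duality of $V^\natural_\bZ$ and the quantum-dimension-$1$ structure of the orbifold category. The remaining points --- checking that the $\widetilde g$-eigenspaces along the $(\ast,0)$-column can be taken over the same $R$-form of $U$ as the $(0,\ast)$-column, and verifying that the abelian-intertwining-algebra and module axioms, being polynomial identities over $R$, survive base change --- I expect to be laborious but not conceptually hard.
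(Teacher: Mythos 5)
Your overall route is the same as the paper's: use the Fricke hypothesis to identify the cyclic orbifold dual of $V^\natural$ along $g$ with $V^\natural$ itself, build an $R$-form (for $R = \bZ[\frac{1}{N},e^{2\pi i/N}]$) of the abelian intertwining algebra assembling the twisted modules $V^\natural(g^i)$, and extract the $R$-form of $V^\natural(g)$ as a direct summand with the induced $(V^\natural)^g_\bZ\otimes R$-action and $(\bZ/|g|\bZ).C_\bM(g)$-symmetry. The difference is that the paper's proof is essentially a pair of citations: the self-orbifold statement is Theorem 1 of \cite{PPV17} (anomaly-free case) together with Proposition 3.2 of \cite{C17a} (anomalous case), and the existence of a self-dual $R$-form of the abelian intertwining algebra ${}^g_NV^\natural$ with $(\bZ/N\bZ).C_\bM(g)$-action is Proposition 1.4.1 of \cite{C17}. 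Your second and fourth paragraphs are an attempt to reprove that proposition from scratch, and the step you yourself flag as delicate --- the $N$-integrality of the structure constants --- is exactly the content of the cited result, so as written the argument has a hole precisely where the paper leans on \cite{C17}.

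There is also a more concrete gap: your construction silently assumes $g$ is anomaly-free. You index the simple $(V^\natural)^g$-modules by $(\bZ/n\bZ)^2$ with $n=|g|$ and anchor the $(\ast,0)$-column by declaring $\bigoplus_i W_{i,0}\cong V^\natural$. By \cite{vEMS}, the fusion group is a central extension of $\bZ/n\bZ$ by $\bZ/n\bZ$ that is split, and $\bigoplus_i W_{i,0}$ closes into a vertex operator algebra, only when the anomaly vanishes; for anomalous classes (and anomalous Fricke classes exist --- these are exactly the ones with level $N=nh$, $h>1$) the extension is nonsplit, the twisted conformal weights lie in $\frac{1}{nh}\bZ$, and your two anchoring columns cannot be set up as stated. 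This is why the paper works with the $N$-indexed abelian intertwining algebra ${}^g_NV^\natural$ and a central extension by $\bZ/N\bZ$, only afterwards reducing the central scalar action to $\bZ/|g|\bZ$ via the \cite{vEMS} description. To repair your argument you would either need to rerun your construction with $\bZ/N\bZ$ in place of $\bZ/n\bZ$ throughout, or restrict to the anomaly-free case and handle the rest separately as the paper does.
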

\begin{proof}
By Theorem 1 of \cite{PPV17} (in the anomaly-free case) and Proposition 3.2 of \cite{C17a} (in the anomalous case), the cyclic orbifold dual $V^\natural/g$ is isomorphic to $V^\natural$.  Then by Proposition 1.4.1 of \cite{C17}, there is a self-dual $\bZ[\frac{1}{N}, e^{2\pi i/N}]$-form of the abelian intertwining algebra ${}^g_NV^\natural$ with an action of $(\bZ/N\bZ).C_\bM(g)$ by homogeneous automorphisms.  The central $\bZ/N\bZ$ acts by scalar multiplication, and the description of cyclic orbifolds in \cite{vEMS} allows us to reduce this to $\bZ/|g|\bZ$.  This abelian intertwining algebra contains the $\bZ[\frac{1}{N}, e^{2\pi i/N}]$-form of $V^\natural(g)$ as a sub-$(V^\natural)^g_\bZ \otimes \bZ[\frac{1}{N}, e^{2\pi i/N}]$-module.
\end{proof}

This theorem yields a candidate for ${}^g\hat{V} \otimes_{\bZ[e^{2\pi i/|g|}]} \bZ[\frac{1}{N}, e^{2\pi i/N}]$.  To obtain ${}^g\hat{V}$ itself, we would need to choose a suitable stable lattice whose reduction to $\bZ/|g|\bZ$ is equivariantly isomorphic to $\hat{H}^0(g,{}^1\hat{V})$.  Even assuming Conjecture \ref{conj:composite-modular-moonshine}, we would have to glue characteristic zero representations to characteristic $p^k$ representations for various primes $p$.  As Jeremy Rickard pointed out to me at \cite{R18}, the fact that the characteristic zero characters are equal to the Brauer characters is far from sufficient for the existence of a compatible mixed characteristic representation.

The conjectured vertex superalgebra structure on ${}^g\hat{V}$ is a bit of a mystery.  Borcherds claims in \cite{B98} that we get such a structure for the classes 1A, 2B, 3B, 3C, 5B, 7B, and 13B, but apparently not 2A and 3A.  Although I have not seen Borcherds's constructions, it is natural to extend the pattern of prime-order non-Fricke classes:

\begin{conj}
If $g$ is non-Fricke, then ${}^g\hat{V}$ is a vertex superalgebra.
\end{conj}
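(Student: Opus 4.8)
The plan is to mimic the construction in the theorem above, feeding in the non-Fricke analogue of the input ``$V^\natural/g\cong V^\natural$'' and extracting a vertex \emph{super}algebra in place of an honest vertex algebra. Let $g\in\bM$ be non-Fricke of order $n$, with $T_g$ of level $N$. Then the cyclic orbifold $W:=V^\natural/g$ is a holomorphic $c=24$ vertex operator algebra with $W\not\cong V^\natural$ --- one of the $71$ on Schellekens' list --- and the duality of cyclic orbifolds (\cite{vEMS}) equips $W$ with a canonical order-$n$ automorphism $\hat g$ with $W/\hat g\cong V^\natural$; since $W\not\cong V^\natural$, this $\hat g$ is itself non-Fricke on $W$. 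Exactly as in the theorem above, Proposition~1.4.1 of \cite{C17} (with the anomalous-case bookkeeping of \cite{C17a}) produces a self-dual $\bZ[\tfrac1N,e^{2\pi i/N'}]$-form of the abelian intertwining algebra ${}^g_NV^\natural=\bigoplus_{a,b}V^\natural(g^a)^{(b)}$ carrying a homogeneous action of $(\bZ/N\bZ).C_\bM(g)$, for a suitable multiple $N'$ of $N$; I keep the freedom to pass from $N$ to $N'$ because, as explained below, the super sign is a $2$-torsion phenomenon that need not already be visible over $\bZ[e^{2\pi i/N}]$. This abelian intertwining algebra is graded by a finite abelian group $\Gamma$ carrying a quadratic form $Q\colon\Gamma\to\bQ/\bZ$ that governs its braided (anyonic) commutativity: subgroups of $\Gamma$ on which $Q$ vanishes cut out honest vertex subalgebras, and subgroups on which $Q$ is $\{0,\tfrac12\}$-valued cut out vertex superalgebras.

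The candidate for ${}^g\hat V$ is the part of ${}^g_NV^\natural$ supported on the subgroup $\Sigma\leq\Gamma$ generated by the isotropic cyclic subgroup that realizes $W=V^\natural/g$ as a vertex subalgebra --- the even part --- together with the class of the $g$-twisted sector, which becomes the odd part. The heart of the argument is to show that the non-Fricke hypothesis forces $Q|_\Sigma$ to take values only in $\{0,\tfrac12\}$, so that the braiding on the $\Sigma$-part degenerates to a sign. The value of $Q$ on the twisted generator reflects the vacuum anomalies of the relevant twisted sectors modulo $\bZ$, and whether it is $2$-torsion in $\bQ/\bZ$ (the super case) or a root of unity of higher order (in which case no super extension of $W$ lives inside ${}^g_NV^\natural$) should be controlled by the level $N$ and precisely by the Fricke/non-Fricke dichotomy, through the orbifold bookkeeping of \cite{vEMS} and the (non-)isomorphism statements of \cite{PPV17} and \cite{C17a}. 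Granting this, the $\Sigma$-part is a vertex superalgebra over $\bZ[\tfrac1N,e^{2\pi i/N'}]$, and it inherits the conformal vector and the self-dual invariant bilinear form of ${}^g_NV^\natural$ --- which would incidentally give the conformal-vector and invariant-form part of Conjecture~\ref{conj:unified-moonshine}(4) for non-Fricke $g$. One then compares its reduction over $\bZ/|g|\bZ$ with the Tate cohomology $\hat H^*(\hat g,V^\natural_\bZ)$, which carries its own vertex superalgebra structure with $\hat H^{\mathrm{even}}$ even and $\hat H^{\mathrm{odd}}$ odd by the formalism of \cite{B98} and \cite{B99}; matching the two should exhibit the characteristic-zero and characteristic-$p$ super structures as the two reductions of a single vertex superalgebra structure on ${}^g\hat V$.

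The hard part will be this last matching step, and it is the obstruction already flagged after Conjecture~\ref{conj:composite-modular-moonshine}: to pin down ${}^g\hat V$ over $\bZ[e^{2\pi i/|g|}]$ itself one must choose a $\hat g$-stable lattice inside the $\bZ[\tfrac1N,e^{2\pi i/N'}]$-form whose reduction modulo each $p\mid|g|$ is \emph{equivariantly} the Tate cohomology $\hat H^*(\hat g,V^\natural_\bZ)$, and equality of ordinary and Brauer characters is far too weak to guarantee that a single lattice does this for all such $p$ at once. For composite non-Fricke $g$ this also presupposes a form of Conjecture~\ref{conj:composite-modular-moonshine} --- there the conclusion $\hat H^1=0$ is the Fricke phenomenon, whereas here one wants the complementary non-vanishing, which is exactly what makes the odd part of ${}^g\hat V$ nonzero. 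A secondary, more structural difficulty appears precisely when $|g|$ is odd, i.e., for $3$B, $3$C, $5$B, $7$B, $13$B and their composites: since $\tfrac12\notin\tfrac1{|g|}\bZ/\bZ$, the super sign cannot be detected by the anyonic quadratic form at $\tfrac1{|g|}\bZ$-granularity, so one must genuinely enlarge the grading group and coefficient ring (the role of $N'$) and argue that this enlargement, and hence the resulting $\bZ/2\bZ$-grading, is canonical rather than an artifact. For $|g|$ prime all of this is already implicit in the solved Modular Moonshine conjecture and in Borcherds's list of super cases, so the genuinely new content lies in the passage to composite non-Fricke $g$.
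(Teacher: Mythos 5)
This statement is a conjecture that the paper does not prove --- its only stated evidence is Borcherds's unpublished claim of superalgebra structures for certain prime-order classes, extrapolated to the non-Fricke pattern --- so there is no proof of the paper's to compare yours against. Your text is, by its own admission, a strategy sketch rather than a proof: the ``heart of the argument'' (that the non-Fricke hypothesis forces $Q|_\Sigma$ to be $\{0,\tfrac12\}$-valued) is introduced with ``should be controlled by'' and then assumed, and the final gluing step is exactly the obstruction the paper already flags after Conjecture \ref{conj:composite-modular-moonshine}.

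Beyond the acknowledged gaps, the central mechanism you propose appears to be aimed at the wrong source of the super structure. The grading group of the abelian intertwining algebra ${}^g_NV^\natural$ has exponent dividing $N$ (or $2N$ in the anomalous case), and its quadratic form takes values in $\tfrac{1}{N}\bZ/\bZ$ resp.\ $\tfrac{1}{2N}\bZ/\bZ$; for the odd-order non-Fricke classes $3$B, $5$B, $7$B, $13$B --- which are most of Borcherds's claimed super examples --- $N$ is odd, so $Q$ never takes the value $\tfrac12$ and no super sign can be extracted from the braiding. Your proposed remedy of enlarging $N$ to $N'$ does not help: enlarging the coefficient ring $\bZ[\tfrac1N,e^{2\pi i/N'}]$ does not change the grading group or its quadratic form, and there is no canonical enlargement of the grading group itself. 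In the known prime-order cases the $\bZ/2\bZ$-grading is homological --- it is the Tate cohomology degree on $\hat H^0\oplus\hat H^1$ in characteristic $p$ --- not a characteristic-zero braiding phenomenon, so identifying the odd part with ``the class of the $g$-twisted sector'' in $\Gamma$ is not supported by the examples the conjecture is modeled on. Finally, you bypass the one structural fact the paper singles out as special to non-Fricke elements, namely that the cyclic orbifold dual is always the Leech lattice vertex operator algebra (Theorem 4.5 of \cite{C17a}); the paper's follow-up conjecture indicates that any natural construction should route through $V_\Lambda$, whereas your $W$ is left as an unspecified member of Schellekens' list.
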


One feature that distinguishes the non-Fricke elements of $\bM$ is the fact that the cyclic orbifold dual of $V^\natural$ along a non-Fricke element is always the Leech lattice vertex operator algebra, as conjectured in \cite{T93} and proved in Theorem 4.5 of \cite{C17a}.  It is therefore reasonable to expect the following:

\begin{conj}
When $g$ is non-Fricke, there is a natural construction of the vertex superalgebra structure on ${}^g\hat{V}$ using the Leech Lattice.
\end{conj}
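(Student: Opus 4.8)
The natural strategy is to realize ${}^g\hat{V}$ inside the Leech lattice vertex operator algebra $V_\Lambda$ via cyclic orbifold duality, and then to read off both its integral structure and its superalgebra structure from the manifestly lattice-theoretic data living on the $V_\Lambda$ side. The starting point is Theorem 4.5 of \cite{C17a}: when $g$ is non-Fricke, the cyclic orbifold dual $V^\natural/g$ is isomorphic to $V_\Lambda$. This isomorphism equips $V_\Lambda$ with a distinguished automorphism $\hat g$ of order $|g|$ — up to anomaly corrections of the sort appearing in \cite{C17a} — which is concretely the standard lift of an isometry $\sigma_g$ of the Leech lattice, and it realizes $C_\bM(g)$ as a subgroup of $\Aut(V_\Lambda)$ that normalizes $\langle \hat g\rangle$ and is built from lattice translations and isometries of $\Lambda$. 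Under this equivalence the abelian intertwining algebra ${}^g_N V^\natural$ of \cite{C17} (Proposition 1.4.1) is identified with the analogous abelian intertwining algebra ${}^{\hat g}_N V_\Lambda = \bigoplus_j V_\Lambda(\hat g^{\,j})$ built from the $\hat g$-twisted sectors of $V_\Lambda$, so that $V^\natural(g)$ — the conjectural ${}^g\hat{V}\otimes\bC$ — appears inside this Leech-lattice object as a sum of eigenspaces spread across the untwisted and twisted sectors.

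The first concrete step is to extract the integral form. The lattice vertex operator algebra $V_\Lambda$ carries a canonical self-dual $\bZ$-form, and for the isometry $\sigma_g$ the Lepowsky-style construction of the twisted modules $V_\Lambda(\hat g^{\,j})$ has a natural $\bZ[\tfrac1N, e^{2\pi i/N}]$-form whose denominators are controlled by the eigenvalues of $\sigma_g$ on $\Lambda\otimes\bR$. For the prime-order non-Fricke classes these eigenvalues are primitive $p$-th roots of unity and the relevant modules over $\bZ[e^{2\pi i/p}]$ are free, so the construction is optimal; the composite non-Fricke classes are handled sector by sector in the same way. Transporting this form back across the orbifold equivalence yields a $\bZ[\tfrac1N, e^{2\pi i/N}]$-form of $V^\natural(g)$ with a compatible action of $V^\natural_\bZ$, whose $C_\bM(g)$-invariance is automatic because on the $V_\Lambda$ side the $C_\bM(g)$-action is implemented by lattice automorphisms.

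The real content is the superalgebra structure. The abelian intertwining algebra $\bigoplus_j V_\Lambda(\hat g^{\,j})$ carries intertwining operators whose braiding and associativity isomorphisms are governed by a quadratic form on $\bZ/N\bZ$ determined by the conformal weights of the twisted sectors — equivalently, by the Moonshine-type anomaly. The plan is to show that the $\bZ[\tfrac1N, e^{2\pi i/N}]$-submodule corresponding to ${}^g\hat{V}$ — the span of the twisted sectors assembled with the appropriate signs, with the $\tfrac{1}{|g|}\bZ$-grading and the super-grading read off from the sector labels — is closed under these operators and has associator trivial up to sign, hence is an honest vertex superalgebra over $\bZ[\tfrac1N, e^{2\pi i/N}]$, literally built from $V_\Lambda$. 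One then checks that its reduction modulo $|g|$ recovers the modular-moonshine vertex algebra $\hat H^0(g,V^\natural_\bZ)$ with its $C_\bM(g)$-action and the grading predicted by Conjecture \ref{conj:unified-moonshine}, after which a suitable choice of $\bZ[e^{2\pi i/|g|}]$-sublattice produces ${}^g\hat{V}$ itself, with all of its structure inherited from the Leech lattice.

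The main obstacles are, in roughly increasing order of severity: controlling the anomalous non-Fricke classes (such as 3C and the composite ones), where both the order of $\hat g$ and the intertwining cocycle are twisted; verifying that the selected part of $\bigoplus_j V_\Lambda(\hat g^{\,j})$ is genuinely closed with vanishing associator, so that it is a vertex superalgebra rather than merely a twisted module or an abelian intertwining algebra — this is where one must actually use, rather than cite, the orbifold formalism of \cite{vEMS}; and descending the coefficients from $\bZ[\tfrac1N, e^{2\pi i/N}]$ to $\bZ[e^{2\pi i/|g|}]$ while matching mod-$p$ reductions with Tate cohomology, where the obstruction noted by Rickard at \cite{R18} — that equality of characteristic-zero and Brauer characters is far from enough to glue a mixed-characteristic representation — reappears and is likely to be the true sticking point.
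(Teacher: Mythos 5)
There is nothing in the paper to compare your attempt against: the statement you are addressing is a \emph{conjecture}, and the paper offers no proof. Immediately after stating it the author writes that it is ``rather vague as stated'' and that a precise formulation is itself the missing ingredient. So the honest assessment is that your text is not a proof but a research program, and it should be judged as such.

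As a program, your outline is sensible and tracks the paper's surrounding discussion closely: you correctly anchor the strategy in Theorem 4.5 of \cite{C17a} (the cyclic orbifold dual of $V^\natural$ along a non-Fricke element is $V_\Lambda$), you mirror the paper's own Fricke-case theorem by transporting a $\bZ[\frac{1}{N}, e^{2\pi i/N}]$-form across the orbifold equivalence, and you correctly identify the Rickard obstruction from \cite{R18} as the likely sticking point. But every step that would constitute actual mathematical content is deferred: ``the plan is to show'' that the selected part of $\bigoplus_j V_\Lambda(\hat g^{\,j})$ is closed under the intertwining operators with associator trivial up to sign; ``one then checks'' that the mod-$|g|$ reduction recovers $\hat H^0(g,V^\natural_\bZ)$ --- which for composite-order classes presupposes the still-open Conjecture \ref{conj:composite-modular-moonshine}; and the descent of coefficients to $\bZ[e^{2\pi i/|g|}]$ is exactly the gluing problem the paper's Section 4 identifies as requiring new structure (the speculative $\bF_1$/Witt-vector lift of Conjecture \ref{conj:F1}). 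Note also that the object ${}^g\hat{V}$ whose superalgebra structure you aim to construct is itself only conjectural (it is part of Conjecture \ref{conj:unified-moonshine}), and the paper's clause identifying ${}^g\hat{V}\otimes\bC$ with $V^\natural(g)$ is stated only for Fricke $g$, so your identification of the target in the non-Fricke case is an additional unproved assumption. In short: the approach is the natural one and is consistent with what the author appears to intend, but no part of the conjecture is established by it.
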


Unfortunately, this is rather vague as stated.  If we had a more precise formulation, we might be well on our way toward a construction.  

\section{Speculation about gluing}

In the previous section, we noted that we don't have an obvious way to glue a $\bZ[\frac{1}{N}, e^{2\pi i/N}]$-form of $V^\natural(g)$ to the Tate cohomology of $V^\natural_\bZ$ to form ${}^g\hat{V}$.  This problem could be resolved if we had a canonical way to lift Tate cohomology to mixed characteristic.  Tate cohomology does not in general admit such canonical lifts, but we could hope that there is an additional structure on $V^\natural_\bZ$, like a non-linear descent datum, that produces such a lift.  I have been unable to find a structure of this precise form in the mathematical literature, but it bears some resemblance to various proposals for algebraic structures over the mythical ``field of one element''.  While $\bF_1$-structures do not have a universally recognized interpretations, one feature many proposals share is that they let one treat mixed-characteristic and equal-characteristic objects on a roughly equal footing.

\begin{conj} \label{conj:F1}
There is an $\bF_1$-module structure on $V^\natural_\bZ$ with $\bM$-symmetry, where by ``$\bF_1$-module structure'' we mean that for each $g \in \bM$, there is a distinguished lift of the $(\bZ/|g|\bZ).C_\bM(g)$-action on $\hat{H}^*(g,V^\natural_\bZ)$ (as a graded module) to the big Witt vectors $W_\infty \hat{H}^*(g,V^\natural_\bZ)$.
\end{conj}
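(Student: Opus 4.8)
The plan is to read the asserted ``$\bF_1$-module structure'' as a system of Adams-type operations $\psi_n$ on each Tate cohomology module---equivalently a $G$-equivariant $W_\infty$-coaction---that commute with the $(\bZ/|g|\bZ).C_\bM(g)$-action and are mutually compatible as $g$ varies, and then to realize those operations as the arithmetic shadows of the Hecke operators $T_n$ already visible on the McKay--Thompson series and twisted modules. So, fixing a representative $g$ of order $m=|g|$ with $N$ the level of $T_g$, the first task is to produce, for each $n$, an operator on $\hat{H}^*(g,V^\natural_\bZ)$ lifting the $n$-th Hecke operator on the character side. For $g$ of prime order $p$ I would try to build these on the Modular Moonshine vertex algebra ${}^gV$ over $\bF_p$, using the $\bZ[\frac{1}{N},e^{2\pi i/N}]$-form of the twisted module $V^\natural(g)$ from the theorem above, together with the self-dual integral form $V^\natural_\bZ$, as the bridge between the complex Hecke operators $T_n$ and their reductions; the exponential $\Lambda^{(2)}_t$ and the ``self-exponential'' picture recalled above say precisely that such a compatible family exists at the level of characters, so the remaining point is to lift it to the module level. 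For composite $m$ one first reduces to the prime-power case using the Sylow splitting $\bZ/m\bZ\cong\prod_{p\mid m}\bZ/p^{a_p}\bZ$ and the compatibility of $W_\infty$ and $\hat{H}^*$ with finite products, and then deforms the prime-order construction over $\bZ/p^a\bZ$; here one must assume enough of Conjectures \ref{conj:composite-modular-moonshine} and \ref{conj:unified-moonshine} to know that $\hat{H}^0(g,V^\natural_\bZ)$ carries the requisite algebraic structure.

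Once such operations have been chosen for each class, the second task is to show the family is canonical and $\bM$-equivariant: invariant under conjugation in $\bM$, and compatible with the isomorphisms $\hat{H}^*(\hat{g},{}^h\hat{V})\cong{}^{gh}\hat{V}\otimes\bZ/m\bZ$ of Conjecture \ref{conj:unified-moonshine}(2). In practice the earlier choices have to be rigidified, and the natural way is to demand functoriality through the cyclic-orbifold formalism of \cite{vEMS}, so that ``distinguished'' means ``the unique lift compatible with all orbifold restriction and transfer maps''. Carrying this out amounts to showing that the integral and cyclotomic forms constructed in \cite{C17} and \cite{vEMS} carry more structure---an intrinsic $\lambda$-action---than has so far been extracted from their constructions.

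The hard part will be the gluing across residue characteristics. As Rickard observed (recorded at \cite{R18}), equality of ordinary characters with Brauer characters falls far short of producing a single mixed-characteristic (here, Witt-vector) representation out of a characteristic-zero representation and the various characteristic-$p^k$ representations; Tate cohomology admits no canonical lift, and the hoped-for ``non-linear descent datum'' on $V^\natural_\bZ$ that would supply one is not known to exist. Concretely, one must promote the $\lambda$-/Hecke structure visible on the character ring---which the genus-zero behaviour of the $T_{gh}$ strongly suggests---to a module-level $W_\infty$-coaction commuting with the $(\bZ/|g|\bZ).C_\bM(g)$-action for every $g$ at once, and one must first make ``$\bF_1$-module structure'' precise enough that this is a well-posed claim rather than a heuristic. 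Absent a genuinely new input for the gluing---most plausibly a $\lambda$- or $\bF_1$-structure on $V^\natural_\bZ$ arising intrinsically from its realization as an orbifold/fixed-point form---I expect the conjecture to stay out of reach even though the piece over each fixed prime looks tractable.
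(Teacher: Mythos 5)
There is nothing in the paper to compare your argument against: the statement is Conjecture \ref{conj:F1}, presented explicitly as speculation, and the paper offers no proof. The author even remarks that he has been unable to find a structure of the required form anywhere in the literature, and that the notion of ``$\bF_1$-module structure'' is being coined ad hoc for the occasion. So the only correct verdict on your submission is that it is not a proof, and to your credit you say as much in your final sentence. What you have written is a research outline, and as such it tracks the paper's own surrounding discussion quite faithfully: the identification of the hoped-for structure with a $\Lambda$-ring/Adams-operation datum is exactly the reading suggested by the paper's citation of \cite{B08}, and your ``hard part'' is precisely the gluing obstruction the paper attributes to Rickard \cite{R18}.

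The concrete gaps, in case you want to push further: (1) the Hecke/exponential structure of $\Lambda^{(2)}_t$ lives entirely on characters and $q$-expansions, and nothing in the paper or in your sketch produces an actual operator on the module $\hat{H}^*(g,V^\natural_\bZ)$ whose trace recovers $T_n$ --- this is the whole content of the conjecture, not a ``remaining point.'' (2) Your reduction of composite $m$ to prime powers via a product decomposition would at best handle each residue characteristic separately; the conjecture's force is exactly that the lifts for different $g$ (and different primes dividing $|g|$) cohere into one structure on $V^\natural_\bZ$, and a Sylow-by-Sylow construction builds in no such coherence. (3) You invoke Conjectures \ref{conj:composite-modular-moonshine} and \ref{conj:unified-moonshine}, both open, so even the conditional version of your program is not currently available. (4) The term ``distinguished lift'' is not yet well defined --- the paper leaves it vague, and your proposed rigidification via orbifold functoriality is itself an unproved assertion that the forms of \cite{C17} and \cite{vEMS} carry an intrinsic $\lambda$-action. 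None of this is a criticism of your judgement --- the problem is open --- but the text should be labelled as a discussion of the conjecture rather than wrapped in a proof environment.
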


In \cite{B08}, $\Lambda$-rings are proposed as an interpretation for the term ``$\bF_1$-algebra''.  It is then reasonably natural to presume modules ought to have a correspondingly non-linear interaction with Witt vectors.  Conjecture \ref{conj:F1} would yield candidates for the proposed modules ${}^g\hat{V}$, but it does not naturally lead to any exceptional modularity properties, such as the Hauptmodul property for $p$-singular elements.

\section{Connections to newer moonshines}

In 2014, an amateur mathematician, Tito Piezas III, posted a question proposing a Thompson moonshine phenomenon on \texttt{mathoverflow.net}, a question and answer site for mathematicians.  The initial observation was that the weight 1/2 weakly holomorphic modular form
\[ b(\tau) = q^{-3} + 4 - 240q + 26760q^4 - 85995q^5 + \cdots \]
has coefficients that easily decompose into combinations of dimensions of irreducible representations of the Thompson sporadic group $Th$.  It was quickly pointed out that there is already a Thompson moonshine, coming from Generalized Moonshine for the class 3C, where the twisted module has character $\sqrt[3]{j(\tau/3)} = q^{-1/9} + 248q^{2 /9}+ 4124q^{5/9} + \cdots$ and an action of $\bZ/3\bZ \times Th$.  These two moonshines are connected by the Harvey-Moore-Borcherds correspondence, where up to some adjustment with eta functions and reparametrization, the Borcherds product attached to $b(\tau)$ is equal to the character of the irreducible 3C-twisted $V^\natural$-module.  Generalized Moonshine then gives an explanation for the appearance of Thompson representations attached to powers of $q$ in $b(\tau)$ that are squares.  However, there is at this time no explanation for the representation-theoretic content of other coefficients.

Piezas's observation was refined to a concrete conjecture in \cite{HR15}, and the conjecture was solved in \cite{GM16}.  The function $b(\tau)$ was replaced with
\[ 2b(\tau) + 240\theta(\tau) = 2q^{-3} + 248 + 2 \cdot 27000q^4 - 2\cdot 85995q^5 + \cdots \]
to remove sign ambiguities, and the theorem is that there is a graded $Th$-super-module $V = \bigoplus_{n \geq -3} V_n$ whose graded dimension is $2b(\tau) + 240\theta(\tau)$, such that for any $h \in Th$, the trace function $f_h := \sum_{n \geq -3} \Tr(h|V_n) q^n$ is a modular form of weight 1/2 for some congruence group controlled by the order of $h$.  We find that the Borcherds product attached to $2b(\tau) + 240\theta(\tau)$ is equal to $\eta^{248}Z(1,g;\tau)^2$ for $g$ an element of class 3C in $\bM$.  More generally, one may ask whether Borcherds products take $f_h$ to suitably altered Generalized Moonshine functions, or perhaps Modular Moonshine functions, and there are some promising partial answers.  However, the nature of this correspondence is still a complete mystery.

The presence of a square in $\eta^{248}Z(g,1;\tau)^2$ is quite similar to the situation in Mathieu Moonshine, where the Borcherds lift of the elliptic genus of a K3 surface yields the Igusa cusp form $\Phi_{10}$.  In this case, $1/\Phi_{10}$ is the square of a function with representation-theoretic interest, namely the Weyl-Kac-Borcherds denominator of a hyperbolic-type Lie algebra \cite{GN95}.  I do not know a good explanation for the natural appearance of squares of characters of infinite dimensional algebras.

Another mystery comes from a conjecture of Borcherds that was resolved in \cite{GL13}, asserting that the Modular Moonshine vertex algebra for 3C is isomorphic to the $E_8$-vertex algebra over $\bF_3$.  This means the vertex algebra has symmetries of $E_8(\bF_3)$, which is far larger than $Th$.  While $Th$ embeds in $E_8(\bF_3)$, it does not embed in $E_8(\bC)$.  This seems to be evidence against the idea that the conjectural module ${}^g\hat{V}$ can be made by simply taking an integral form of the $E_8$ vertex algebra.

Finally, a generalization of Thompson moonshine to other weight 1/2 forms, called ``skew-holomorphic moonshine'', was announced by Duncan, Harvey, and Rayhoun in 2016.  While we are still waiting for a paper, it appears that applying the Borcherds product construction to the certain vector-valued forms yields modular functions that can range over all Fricke-invariant McKay-Thompson series.  This suggests the existence of a connection to modular moonshine for Fricke $g$.  Once again, we don't seem to have any theoretical underpinning of the underlying objects.

\end{document}